\newtheorem{thm}{Theorem}[section]
\newtheorem{lem}[thm]{Lemma}
\newtheorem{prop}[thm]{Proposition}
\newcommand{\thmref}[1]{Theorem~\ref{#1}}
\newcommand{\propref}[1]{Proposition~\ref{#1}}
\theoremstyle{remark}
\newtheorem{rmk}{Remark}[section]
\newcommand{\Z}{{\mathbb Z}}
\newcommand{\Q}{{\mathbb Q}}
\newcommand{\N}{{\mathbb N}}
\newcommand{\R}{{\mathbb R}}
\newcommand{\C}{{\mathbb C}}
\begin{document}

\title[Generalized Hurwitz zeta functions]{On the zeros of 
generalized Hurwitz zeta functions}


\author{T. Chatterjee and S. Gun}

\address[T. Chatterjee]
      {Department of Mathematics, 
      Queen's University, Kingston,
      ON K7L 3N6, 
      Canada.}
\email{tapasc@mast.queensu.ca}

\address[S. Gun]
     {Institute of Mathematical Sciences,
       4th Cross Street, C.I.T Campus, 
       Taramani, Chennai 600113,
       India.}
\email{sanoli@imsc.res.in}

\maketitle

\begin{abstract}
In this note, we prove the existence of  infinitely many zeros of certain generalized 
Hurwitz zeta functions in the domain 
of absolute convergence. This is a generalization 
of a classical problem of Davenport, Heilbronn and Cassels 
about the zeros of the Hurwitz zeta function.
\end{abstract}

\smallskip

\section{\bf Introduction}

\bigskip

For a real number $\alpha>0$, the Hurwitz zeta function $\zeta(s,\alpha)$ 
is defined by
\begin{equation*}
\zeta(s,\alpha):= \sum_{n=0}^\infty \frac{1}{(n+\alpha)^s},
\end{equation*}
where $s\in \C$ with real part $\Re(s)> 1$. It has a meromorphic continuation
to the complex plane $\C$, its only pole being a
simple pole at $s=1$ with residue~$1$. 
From now on, we set $s := \sigma +  i t$
with $\sigma, t \in \R$.

In a classical paper, Davenport and Heilbronn \cite{DH} proved that if 
$\alpha \ne 1/2, 1$ is either rational or transcendental, then
$\zeta(s,\alpha)$ has infinitely many zeros with $\sigma >1$. 
Since $\zeta(s,1) = \zeta(s)$ and $\zeta(s, 1/2) = (2^s -1) \zeta(s)$,
they do not have zeros with $\sigma > 1$. On the other hand
when $\alpha$ is an 
algebraic irrational, Cassels \cite{JC} showed 
the existence of infinitely many zeros of $\zeta(s,\alpha)$ for $\sigma >1$.  
For an illuminating account of this theme, see the recent article
of Bombieri and Ghosh \cite{BG}.

For a periodic arithmetic function $f$ with period $q \ge 1$ and $\alpha > 0$, consider
the $L$-function
\begin{equation*}
L(s,f, \alpha): =\sum_{n=0}^\infty \frac{f(n)}{(n+\alpha)^s}, 
\end{equation*}
where $s\in \C$ with $\sigma>1$. This can be thought of as a generalization
of the Hurwitz zeta function. In a recent work, Saias and 
Weingartner \cite{SW} showed that $L(s, f, \alpha)$ has infinitely
many zeros for $\sigma >1$ when $\alpha =1$ and
$L(s,f,1)$ is not  a product of  $L(s, \chi)$ and a Dirichlet
polynomial, where $\chi$ is a Dirichlet character.
In this paper, we study the zeros of $L(s, f,\alpha)$ for 
an arbitrary positive irrational number $\alpha$
in the region $\sigma >1$.
See also a recent work of Booker and Thorne \cite{BT}
where another  generalization of the work of Saias
and Weingartner to higher degree $L$-functions
 is considered.

\smallskip

Since $f$ is periodic with period $q \ge 1$, the generalized 
Hurwitz zeta function can be written as
\begin{equation*}
L(s, f, \alpha) = q^{-s}\sum_{b=1}^q f(b)\zeta(s,(\alpha+b)/q), 
\end{equation*}
for $s\in \C$ with $\sigma>1$. This shows that $L(s, f, \alpha)$ extends 
meromorphically to the whole complex plane with a possible simple 
pole at $s=1$ with residue $q^{-1}\sum_{\substack{b=1}}^{q} f(b)$.
Moreover, if we assume that $f$ is real valued
and $L(s, f, \alpha)$ does have a pole at $s=1$, we  then have
the following theorems about the existence of zeros of $L(s,f,\alpha)$.

\begin{thm}\label{thm1}
Let $\alpha$ be a positive transcendental number and 
$f$ be a real valued periodic arithmetic function 
with period $q \ge 1$. If $L(s, f, \alpha)$ has a pole at $s=1$, then
$L(s, f, \alpha)$ has infinitely many zeros for $\sigma>1$.
\end{thm}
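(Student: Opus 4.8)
The plan is to exploit the transcendence of $\alpha$ through the linear independence over $\Q$ of the frequencies $\log(n+\alpha)$, $n\ge 0$, and then to combine Kronecker's theorem with Hurwitz's theorem on limits of holomorphic functions. The only consequence of the hypothesis that $L(s,f,\alpha)$ has a pole at $s=1$ that I would use is that its residue $q^{-1}\sum_{b=1}^{q}f(b)$ is nonzero, so $f\not\equiv 0$. Then $|f|$ is a nonzero periodic function, and comparison with $\sum_{n\ge 0}(n+\alpha)^{-\sigma}$ gives $S(\sigma):=\sum_{n=0}^{\infty}|f(n)|(n+\alpha)^{-\sigma}\to\infty$ as $\sigma\to 1^{+}$, whereas each individual term is $\le\|f\|_{\infty}\alpha^{-\sigma}$, which stays bounded for $\sigma\in(1,2]$. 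So I would fix once and for all a real $\sigma_{0}\in(1,2]$ with $S(\sigma_{0})>3\|f\|_{\infty}\alpha^{-\sigma_{0}}$; in particular no term $\rho_{n}:=|f(n)|(n+\alpha)^{-\sigma_{0}}$ exceeds $\tfrac12 S(\sigma_{0})$.

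Next I would observe that if $\sum_{n}m_{n}\log(n+\alpha)=0$ with integers $m_{n}$ not all zero, then $\prod_{m_{n}>0}(x+n)^{m_{n}}$ and $\prod_{m_{n}<0}(x+n)^{-m_{n}}$ take the same value at the transcendental number $x=\alpha$, hence are equal as elements of $\Q[x]$, contradicting unique factorization since the linear factors $x+n$ are distinct. Thus $\{\log(n+\alpha):n\ge 0\}$ is linearly independent over $\Q$, and by Kronecker's theorem (with Weyl's equidistribution to see the relevant set of $\tau$ is unbounded) the forward orbit $\{(-\tau\log(n+\alpha)\bmod 2\pi)_{n\ge 0}:\tau\ge T\}$ is dense in the infinite torus $\mathbb T:=(\R/2\pi\Z)^{\{0,1,2,\dots\}}$ for every $T$, in the product topology, since it is dense in every finite coordinate projection. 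For $\varphi=(\varphi_{n})\in\mathbb T$ put $L_{\varphi}(s):=\sum_{n=0}^{\infty}f(n)e^{i\varphi_{n}}(n+\alpha)^{-s}$, holomorphic on $\Re(s)>1$; a tail bound $\le\|f\|_{\infty}\sum_{n>N}(n+\alpha)^{-1-\eta}$ on $\Re(s)\ge 1+\eta$, uniform in $\varphi$, shows $\varphi\mapsto L_{\varphi}$ is continuous from $\mathbb T$ into the holomorphic functions on $\{\Re(s)>1\}$ with uniform convergence on compacta. Since $L(s+i\tau,f,\alpha)=L_{\varphi(\tau)}(s)$ with $\varphi(\tau)_{n}=-\tau\log(n+\alpha)$, density then yields, for every $\varphi\in\mathbb T$, a sequence $\tau_{k}\to\infty$ with $L(\,\cdot\,+i\tau_{k},f,\alpha)\to L_{\varphi}$ uniformly on compacta of $\{\Re(s)>1\}$.

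The crucial point is to produce $\varphi^{*}\in\mathbb T$ with $L_{\varphi^{*}}(\sigma_{0})=0$, that is $\sum_{n\ge 0}\rho_{n}e^{i\varphi_{n}^{*}}=0$, and this is where the choice of $\sigma_{0}$ is used. I would take $N$ large enough that $\sum_{n\le N}\rho_{n}>2\|f\|_{\infty}\alpha^{-\sigma_{0}}\ge 2\max_{n}\rho_{n}$ and $t_{N}:=\sum_{n>N}\rho_{n}<\sum_{n\le N}\rho_{n}$, set $\varphi_{n}^{*}=0$ for $n>N$ so the tail contributes the positive real number $t_{N}$, and invoke the elementary polygon lemma (finitely many plane vectors whose longest has length at most half their total length may be rotated so as to sum to any prescribed point of the closed disc of radius their total length) to choose $\varphi_{0}^{*},\dots,\varphi_{N}^{*}$ with $\sum_{n\le N}\rho_{n}e^{i\varphi_{n}^{*}}=-t_{N}$. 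Now $L_{\varphi^{*}}\not\equiv 0$ (a Dirichlet series $\sum_{n}a_{n}(n+\alpha)^{-s}$ with $a_{n_{1}}\ne 0$ for the smallest such $n_{1}$ tends to $a_{n_{1}}$ after multiplication by $(n_{1}+\alpha)^{s}$ as $\Re(s)\to\infty$), so $\sigma_{0}$ is an isolated zero of $L_{\varphi^{*}}$. Fixing a small closed disc $D\subset\{\Re(s)>1\}$ centred at $\sigma_{0}$ on whose boundary $L_{\varphi^{*}}$ does not vanish and applying Hurwitz's theorem to the convergence of the previous paragraph yields, for all large $k$, a zero of $L(s,f,\alpha)$ inside $D+i\tau_{k}\subset\{\Re(s)>1\}$; after passing to a subsequence with $\tau_{k+1}-\tau_{k}>2\,\mathrm{rad}(D)$ these discs are pairwise disjoint, so $L(s,f,\alpha)$ has infinitely many zeros with $\sigma>1$.

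I expect the main obstacle to be this last step---arranging, by pushing the line $\Re(s)=\sigma_{0}$ close enough to $1$ that the whole series dominates each of its terms, that some unimodular twist of the coefficients makes the series vanish at a real point $\sigma_{0}>1$---together with checking the passage to the limit in the second paragraph carefully enough, uniformly in $\varphi$, for Hurwitz's theorem to apply. The linear-independence input and Kronecker's theorem then serve only to transport this one twisted zero back to genuine zeros of $L(s,f,\alpha)$ with arbitrarily large imaginary part.
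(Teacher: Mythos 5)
Your proof is correct, and its skeleton is the same as the paper's: linear independence of the $\log(n+\alpha)$ over $\Q$ (your unique-factorization argument in $\Q[x]$ is the standard justification of what the paper merely asserts), Kronecker's theorem to make the vertical translates $L(s+i\tau,f,\alpha)$ approximate a prescribed unimodular twist $L_{\varphi}$ uniformly on compacta of $\Re(s)>1$, and Hurwitz/Rouch\'e to pull an isolated zero of the limit function back to infinitely many zeros of $L(s,f,\alpha)$ (this is exactly the paper's Propositions 3.1 and 3.4). Where you genuinely diverge is in the construction of the target function. The paper takes the twist $\beta(n)=\pm 1$ obtained by flipping the sign of the tail $n>m$, so that $F(s)=2\sum_{n\le m}f(n)(n+\alpha)^{-s}-L(s,f,\alpha)$ is positive at $1+\delta$ but tends to $-\infty$ as $s\to 1^{+}$, and finds a real zero by the intermediate value theorem; this is precisely where the pole hypothesis (with residue made positive by replacing $f$ by $-f$) enters. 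You instead use the full circle of unimodular twists together with the polygon/disc lemma --- the degenerate case of the paper's Proposition 3.3, which the authors deploy only for Theorem 1.2 --- to force $L_{\varphi^{*}}(\sigma_{0})=0$ at a chosen real $\sigma_{0}$. The cost is the bookkeeping needed to push $\sigma_{0}$ close enough to $1$ that no single term exceeds half of $\sum_{n}|f(n)|(n+\alpha)^{-\sigma_{0}}$; the gain is that you never actually use the pole, only $f\not\equiv 0$, so your argument proves a statement stronger than Theorem 1.1. Two trivial points to tidy: since $f$ may take negative values, setting $\varphi_{n}^{*}=0$ for $n>N$ makes the tail contribute some real number $c$ with $|c|\le t_{N}$ rather than $+t_{N}$ itself (either put $\varphi_{n}^{*}=\pi$ where $f(n)<0$, or simply aim the head at $-c$), and the lengths fed to the polygon lemma should be $\rho_{n}=|f(n)|(n+\alpha)^{-\sigma_{0}}$ with the sign of $f(n)$ absorbed into the angle; neither affects the argument.
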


\begin{thm}\label{thm2}
Let $\alpha$ be a positive algebraic irrational number and 
$f$ be a positive valued periodic arithmetic function 
with period $q \ge 1$. Also let $c$ be defined by
$$
c:= \frac{\underset{n}{\rm max}~f(n)}{\underset{n}{\rm min}~f(n)} .
$$
Assume that $c< 1.15$ and $L(s, f, \alpha)$ has a pole at $s=1$. Then
$L(s, f, \alpha)$ has infinitely many zeros for $\sigma>1$.
\end{thm}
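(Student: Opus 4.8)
The plan is to mimic the Davenport–Heilbronn–Cassels strategy in the form refined by Cassels for algebraic irrational shifts, adapting it to the weighted sum $L(s,f,\alpha)=q^{-s}\sum_{b=1}^q f(b)\,\zeta(s,(\alpha+b)/q)$. First I would fix a vertical line $\sigma=\sigma_0>1$ and work with the truncated/Dirichlet-series expansion of $L(\sigma_0+it,f,\alpha)$. Because $f$ is positive, the dominant term as we organize the series by the size of $n+\alpha$ is the first one, $f(0)\alpha^{-s}$ (or $f(b_0)(b_0+\alpha)^{-s}$ for the index realizing $\min_n f(n)/(n+\alpha)^{\sigma_0}$ appropriately normalized); the key point is that after pulling out this leading term, the remaining tail is, in an averaged sense, small compared to it precisely when $c<1.15$. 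The constant $1.15$ should emerge from requiring that the sum of the moduli of the non-leading coefficients, suitably weighted, stays below the modulus of the leading coefficient on a positive-density set of $t$, so that a winding-number / argument-principle count forces sign changes (hence zeros) of a real-analytic proxy for $L$.

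Next I would make the oscillation explicit. Writing $(n+\alpha)^{-it}=\exp(-it\log(n+\alpha))$, the values $\log(n+\alpha)$ for $n=0,1,2,\dots$ are, by the algebraicity of $\alpha$ together with the six-exponentials / Baker-type linear-independence input (the same ingredient Cassels uses), $\Q$-linearly independent in a strong quantitative form on any finite initial segment. By Weyl's equidistribution theorem the point $\big(t\log\alpha,\ t\log(1+\alpha),\ \dots,\ t\log(N+\alpha)\big)$ equidistributes mod $1$ in the torus $\mathbb T^{N+1}$ as $t$ ranges over $\R$. Hence I can choose the phases so that the leading term $f(0)\alpha^{-\sigma_0}e^{-it\log\alpha}$ points in a prescribed direction while the next several terms are aligned to oppose it and the far tail is controlled by $\sum_{n\ge N}(n+\alpha)^{-\sigma_0}$, which is small for $N$ large. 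The constraint $c<1.15$ is exactly what guarantees that even in the worst alignment of the finitely many ``middle'' terms, the leading term still dominates, so that $\Re\big(e^{i\theta}L(\sigma_0+it,f,\alpha)\big)$ takes both signs for $t$ in a set of positive lower density. Since this real part is a real-analytic function of $t$ that is not eventually of one sign, it has infinitely many zeros; and zeros of the real part on the line, combined with an analogous argument for a rotated line or with Rouché against the leading term on small disks, upgrade to genuine zeros of $L(s,f,\alpha)$ in $\sigma>1$. The hypothesis that $L$ has a pole at $s=1$, i.e. $\sum_{b=1}^q f(b)\neq 0$ (automatic here since $f>0$, but stated for uniformity with Theorem 1), is used to rule out the degenerate situation where $L$ is entire and could conceivably be zero-free in a half-plane; more precisely it feeds into the lower bound for the mean square of $L$ on $\sigma=\sigma_0$, which must be positive to run the density argument.

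Concretely, the steps in order are: (1) normalize and isolate the leading Dirichlet coefficient of $L(\sigma_0+it,f,\alpha)$, bounding the tail $\sum_{n\ge 1} f(n)(n+\alpha)^{-\sigma_0}$ against $f(0)\alpha^{-\sigma_0}$ using $f(n)\le c\cdot\min f$ and choosing $\sigma_0$ close enough to $1$; (2) invoke the quantitative $\Q$-linear independence of $\{\log(n+\alpha)\}_{n=0}^N$ for algebraic $\alpha$ (Baker's theorem), and apply Weyl/Kronecker equidistribution to realize a prescribed choice of arguments on a set of $t$ of positive lower density; (3) deduce that $\Re\big(e^{i\theta}L(\sigma_0+it,f,\alpha)\big)$ changes sign infinitely often, and convert this into infinitely many zeros of $L(s,f,\alpha)$ with $\sigma>1$ via Rouché comparison with the leading term on a sequence of disks centered on the line. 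The main obstacle I anticipate is step (1)–(2) interface: pinning down the sharp threshold $c<1.15$ requires a careful optimization of how large a finite block of ``middle'' coefficients one must control versus how close to $1$ one may push $\sigma_0$, and verifying that the worst-case alignment (which the equidistribution forces one to confront) still leaves the leading coefficient strictly dominant; the number $1.15$ is presumably the outcome of this balancing and will need an explicit, if routine, numerical estimate rather than a slick closed form.
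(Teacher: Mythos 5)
Your proposal has a genuine gap at its core: for an \emph{algebraic} irrational $\alpha$ the numbers $\log(n+\alpha)$, $0\le n\le N$, need not be linearly independent over $\Q$, because the numbers $n+\alpha$ can satisfy multiplicative relations. Baker's theorem does not supply this independence (it upgrades $\Q$-linear independence of logarithms of algebraic numbers to $\overline{\Q}$-linear independence; it does not establish the $\Q$-linear independence you need in the first place), and the six exponentials theorem is likewise irrelevant here. Consequently the point $\bigl(t\log\alpha,\dots,t\log(N+\alpha)\bigr)$ equidistributes only in a subtorus, and you cannot prescribe all $N+1$ phases independently. This is exactly the obstruction that separates the algebraic case from the transcendental one (where independence \emph{does} hold, since a multiplicative relation would give a polynomial equation for $\alpha$), and it is the reason Cassels's paper exists. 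The paper's proof handles it by working with prime ideals in $K=\Q(\alpha)$: it builds a character $\chi$ on fractional ideals, and uses Cassels's lemma --- in each block $N_j<n\le N_{j+1}$ with $M_j=[N_j/10^6]$, at least $27M_j/50$ of the integers $n$ have a ``private'' prime ideal $\mathfrak{p}_n$ dividing $(n+\alpha)\mathfrak{a}$ and no other $(m+\alpha)\mathfrak{a}$ --- to gain enough genuinely free phases $\chi(\mathfrak{p}_n)$ to drive the partial sums of $F(\sigma)=\sum f(n)\chi(n+\alpha)(n+\alpha)^{-\sigma}$ to zero by induction over blocks; Kronecker's theorem applied to a $\Z$-basis of the multiplicative group generated by $\{n+\alpha\}$ then shows $L(s+it,f,\alpha)$ approximates $F(s)$, and Rouch\'e produces the zeros. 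Without Cassels's lemma or an equivalent source of independence, your step (2) cannot be carried out.

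Two further points. First, your step (1) is self-defeating: if the leading coefficient $f(0)\alpha^{-\sigma_0}$ dominated the sum of the moduli of all the others, the triangle inequality would force $L(\sigma_0+it,f,\alpha)\ne 0$, and Rouch\'e against a dominant nonvanishing term yields \emph{no} zeros; moreover sign changes of $\Re\bigl(e^{i\theta}L\bigr)$ on a vertical line do not by themselves produce zeros of $L$. The correct mechanism (both in Davenport--Heilbronn and here) is the opposite: one exploits the pole at $s=1$ so that the \emph{tail} dominates any initial segment for $\sigma$ near $1$, and then chooses phases so the whole sum passes through zero. Second, the constant $1.15$ does not come from a leading-term-versus-tail optimization: it arises in comparing, within each Cassels block, the freely steerable part $S_3=\sum_{n\in\mathfrak{A}}f(n)(n+\alpha)^{-\sigma}$ (over at least $27M_j/50$ indices) against the constrained part $S_2$ (over at most $23M_j/50$ indices); one needs $S_3/S_2>101/99$, and the bound $\frac{27}{23c}\cdot\frac{(N_j+\alpha)^\sigma}{(N_{j+1}+\alpha)^\sigma}>\frac{101}{99}$ is what forces $c<1.15$.
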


\smallskip

\section{\bf Notations and Preliminaries}

\medskip 

From now on, we denote the field of algebraic numbers, the
multiplicative group of non-zero real numbers, the set of 
positive real numbers and the set of non-negative integers
by $\overline{\Q}$, $\R^*$, $\R_+$ and $\N$ respectively.

The following theorem of Kronecker (see \cite{KC}) will play an important role
in proving \thmref{thm1}.

\begin{thm}{\rm(Kronecker)}.\label{thm3}
Let $\alpha_1, \cdots, \alpha_N$ be real numbers which are linearly independent over the
integers and $\beta_1, \cdots ,\beta_N$ be arbitrary real numbers. Then for 
any real number $T$ and $\delta > 0$, there exists a real number 
$t >T$ and integers $x_1,\cdots,x_N$ such that
$$
|t\alpha_n - \beta_n -x_n|< \delta
$$
for all $n=1, \cdots, N$.
\end{thm}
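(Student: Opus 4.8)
The statement is the one-parameter (continuous) form of Kronecker's theorem, and I would prove it by reading the conclusion as a density statement for a linear flow on a torus. Write $\mathbb{T}^N=(\R/\Z)^N$ and let $\phi_t$ be the flow $\phi_t(x)=x+t(\alpha_1,\dots,\alpha_N)$ on $\mathbb{T}^N$, so that $\phi_t(0)=(t\alpha_1,\dots,t\alpha_N)\bmod 1$. The existence of integers $x_1,\dots,x_N$ with $|t\alpha_n-\beta_n-x_n|<\delta$ for all $n$ is precisely the assertion that $\phi_t(0)$ lies within sup-distance $\delta$ of $\beta:=(\beta_1,\dots,\beta_N)\bmod 1$ in $\mathbb{T}^N$. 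Thus the theorem is equivalent to showing that the forward orbit $\{\phi_t(0):t>T\}$ meets every open box in $\mathbb{T}^N$; the constraint $t>T$ is the only thing beyond plain density of the orbit.

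The plan is to prove equidistribution of the flow by Weyl's criterion, which delivers density and the unboundedness in $t$ at one stroke. Write $e(\theta)=e^{2\pi i\theta}$; the characters of $\mathbb{T}^N$ are $\chi_k(x)=e(k_1x_1+\cdots+k_Nx_N)$ for $k=(k_1,\dots,k_N)\in\Z^N$. Setting $L=\sum_{n=1}^N k_n\alpha_n$ we have $\chi_k(\phi_t(0))=e(tL)$, and for $k\neq 0$ the hypothesis that $\alpha_1,\dots,\alpha_N$ are linearly independent over $\Z$ forces $L\neq 0$, whence
\begin{equation*}
\frac{1}{U}\int_0^U e(tL)\,dt=\frac{e(UL)-1}{2\pi i\,UL}\longrightarrow 0\qquad(U\to\infty).
\end{equation*}
By Weyl's criterion the vanishing of all nonzero character averages forces the orbit $\{\phi_t(0):t\ge 0\}$ to be equidistributed, i.e.\ $\frac{1}{U}\int_0^U g(\phi_t(0))\,dt\to\int_{\mathbb{T}^N} g$ for every continuous $g$.

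From equidistribution I would extract the theorem as follows. Let $B$ be the open box of sup-radius $\delta$ about the target $\beta$; it has positive Haar measure. Applying equidistribution (after sandwiching the indicator of $B$ between continuous functions) gives
\begin{equation*}
\lim_{U\to\infty}\frac{1}{U}\,\mathrm{meas}\bigl\{t\in[0,U]:\phi_t(0)\in B\bigr\}=\mathrm{meas}(B)>0 .
\end{equation*}
Hence $\{t\ge 0:\phi_t(0)\in B\}$ has positive asymptotic density, so it is unbounded and in particular contains some $t>T$. For that $t$, taking $x_n$ to be the nearest integer to $t\alpha_n-\beta_n$ yields $|t\alpha_n-\beta_n-x_n|<\delta$ for all $n$, which is the desired conclusion.

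The character integral is a one-line computation; the substance of the argument is the nontrivial direction of Weyl's criterion, namely passing from ``every nonzero character averages to $0$'' to genuine equidistribution, and this is the step I expect to demand the most care. The usual way to supply it is to approximate the indicator of $B$ uniformly by trigonometric polynomials (by Stone--Weierstrass, or explicitly via Fej\'er-type kernels) and then apply the character estimate term by term, bounding the error by comparing slightly larger and smaller boxes. A purely structural alternative bypasses equidistribution: the closure $H$ of $\{\phi_t(0):t\in\R\}$ is a closed subgroup of $\mathbb{T}^N$, and if $H\neq\mathbb{T}^N$ then by Pontryagin duality some nontrivial $\chi_k$ is trivial on $H$, forcing $L=\sum_n k_n\alpha_n=0$ and contradicting independence; the condition $t>T$ is then recovered from the fact that the closure of the forward half-orbit $\{\phi_t(0):t\ge 0\}$, being a closed subsemigroup of a compact group, is already a subgroup and hence all of $\mathbb{T}^N$.
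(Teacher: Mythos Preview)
The paper does not prove this theorem; it is quoted as a classical result with a reference to Chandrasekharan's textbook~\cite{KC}, so there is no ``paper's own proof'' to compare against. Your argument is correct: interpreting the conclusion as density of the linear flow on $\mathbb{T}^N$, computing the character averages $\frac{1}{U}\int_0^U e(tL)\,dt\to 0$ for $L=\sum k_n\alpha_n\ne 0$ (which is exactly the hypothesis of $\Z$-linear independence), invoking Weyl's criterion for equidistribution, and then extracting an unbounded set of good times from positive asymptotic density is one of the standard modern proofs. The closed-subgroup/duality alternative you outline is also sound. The only remark is that your sketch is just that---a sketch---with the approximation step in Weyl's criterion left to a reference; since the paper itself treats the whole theorem as a black box, this is entirely appropriate here.
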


\begin{lem}{\rm(Cassels \cite{JC})}.
Let $\alpha$ be a real algebraic irrational number and $K = \Q(\alpha)$. Also let
$\mathfrak{a}$ be an integral ideal
such that $\mathfrak{a} (\alpha\mathcal{O}_K)$ is an integral ideal. 
Then there exists an $N_0 > 10^6$ satisfying the following property; \par
for any $N > N_0$ and $M = [10^{-6} N]$, there are at least
$51M/100$ integers $n$ in $ N< n \le N + M$ such that 
$(n+\alpha)\mathfrak{a}$ is divisible by a prime ideal $\mathfrak{p}_n$
for which 
$$
\mathfrak{p}_n \nmid \underset{m \le N + M \atop m \ne n} 
\prod (m + \alpha)\mathfrak{a}. 
$$
\end{lem}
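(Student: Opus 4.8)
The plan is to reduce the lemma to the assertion that, for at least $51M/100$ of the integers $n\in(N,N+M]$, the norm $N\bigl((n+\alpha)\mathfrak a\bigr)$ has a prime factor exceeding $N+M$, and then to prove this by a first–moment count on $\prod_n(n+\alpha)\mathfrak a$.

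\emph{Step 1 (privacy from size).} By hypothesis $\alpha\mathfrak a=\mathfrak a(\alpha\mathcal O_K)$ is integral, so each $(n+\alpha)\mathfrak a$ is an integral ideal with a positive integer norm. If a prime ideal $\mathfrak p$ divides both $(n+\alpha)\mathfrak a$ and $(m+\alpha)\mathfrak a$ with $n\neq m$, then from $(n+\alpha)x-(m+\alpha)x=(n-m)x$ for $x\in\mathfrak a$ we get $(n-m)\mathfrak a\subseteq\mathfrak p$, so either $\mathfrak p\mid\mathfrak a$ or the rational prime $p$ below $\mathfrak p$ divides $n-m$, hence $p\le N+M-1$. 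Since only finitely many prime ideals divide $\mathfrak a$, and for $N$ large no prime $>N+M$ divides $N(\mathfrak a)$, a prime factor $p>N+M$ of $N\bigl((n+\alpha)\mathfrak a\bigr)$ produces a prime ideal $\mathfrak p_n$ above $p$ dividing $(n+\alpha)\mathfrak a$ but not $\mathfrak a$ and not $(m+\alpha)\mathfrak a$ for any $m\le N+M$, $m\neq n$. So it suffices to bound the number of $(N+M)$–smooth norms.

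\emph{Step 2 (the count).} Let $d=[K:\mathbb Q]\ge 2$ and let $G\in\mathbb Z[x]$ be a fixed integer multiple of the minimal polynomial of $-\alpha$; then $\deg G=d$, $G$ has no repeated roots, $G$ is irreducible over $\mathbb Q$ up to its constant factor, and $N\bigl((n+\alpha)\mathfrak a\bigr)=N(\mathfrak a)\,|N_{K/\mathbb Q}(n+\alpha)|$ is a fixed positive rational multiple of $|G(n)|$, of size $\sim N(\mathfrak a)n^{d}$. Put $\Pi=\prod_{N<n\le N+M}(n+\alpha)\mathfrak a$, so $\log N(\Pi)=\sum_{N<n\le N+M}\log N\bigl((n+\alpha)\mathfrak a\bigr)=dM\log N\,(1+o(1))$. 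Split $\log N(\Pi)=\sum_{\mathfrak p}v_{\mathfrak p}(\Pi)\log N(\mathfrak p)$ according to the rational prime $p$ below $\mathfrak p$. For $p\le N+M$ away from the fixed set dividing $a_0\,N(\mathfrak a)\,\mathrm{disc}(G)$, Hensel's lemma shows that $p^{k}\mid G(n)$ restricts $n$ to at most $\rho(p)$ residue classes modulo $p^{k}$, where $\rho(p)$ is the number of roots of $G$ mod $p$, and the sum over $k$ stops at $k\asymp d\log_pN$ because $|G(n)|\ll N^{d}$; hence $\sum_{N<n\le N+M}v_p\bigl(N((n+\alpha)\mathfrak a)\bigr)\le \rho(p)M/(p-1)+O(d^{2}\log N/\log p)$. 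Multiplying by $\log p$, summing over $p\le N+M$, using Landau's prime ideal theorem in the form $\sum_{p\le x}\rho(p)\log p/p=\log x+O(1)$ (valid because $\rho(p)$ has mean value $1$, $G$ being essentially irreducible), and noting that the finitely many excluded primes contribute $o(M\log N)$ since $G$ has simple roots, the total contribution of prime ideals above rational primes $\le N+M$ is $M\log N\,(1+o(1))$. Therefore the contribution of prime ideals above rational primes $>N+M$ is at least $(d-1)M\log N\,(1+o(1))$.

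\emph{Step 3 (conclusion, and the obstacle).} An $n\in(N,N+M]$ with no prime factor of $N\bigl((n+\alpha)\mathfrak a\bigr)$ exceeding $N+M$ contributes $0$ to this last quantity, while any other $n$ contributes at most $\log N\bigl((n+\alpha)\mathfrak a\bigr)\le d\log N+O(1)$. Hence the number $A$ of $n\in(N,N+M]$ whose norm has a prime factor $>N+M$ satisfies $A\,(d\log N+O(1))\ge(d-1)M\log N\,(1+o(1))$, i.e.\ $A\ge\frac{d-1}{d}M(1+o(1))$. For $d\ge 3$ this exceeds $\frac{51}{100}M$ once $N>N_0$ for a suitable $N_0>10^{6}$, and together with Step 1 this proves the lemma in that case. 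The genuinely delicate point is $d=2$, where the crude bound gives only $A\ge\frac12M(1+o(1))$ and the extra $\frac1{100}$ must be extracted: here I would use that, apart from $O(1)$ exceptional $n$, one has $N\bigl((n+\alpha)\mathfrak a\bigr)\le(N+M)^{2}$, so such a norm has at most one prime factor $>N+M$, and then control — via a sieve bound for smooth values of $G$ in the short interval $(N,N+M]$, or a second–moment estimate bounding the number of $n$ with $N\bigl((n+\alpha)\mathfrak a\bigr)$ prime or nearly prime — the loss coming from the $n$ whose large–prime contribution is close to the maximal $2\log N$. It is precisely this refinement that pins down the constants $M=[10^{-6}N]$ and $51/100$, and it is the step I expect to be the main obstacle.
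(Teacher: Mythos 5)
The paper does not actually prove this lemma --- it is quoted verbatim from Cassels \cite{JC} --- so your proposal has to be measured against Cassels's original argument, whose overall shape (a first-moment count of prime-ideal divisors of $\prod_n (n+\alpha)\mathfrak a$, splitting rational primes at the threshold $N+M$) you have correctly reproduced. Your Step 1 is sound: a prime ideal above a rational prime $p>N+M$ that divides $(n+\alpha)\mathfrak a$ cannot divide $(m+\alpha)\mathfrak a$ for any other $m\le N+M$, since it would then divide $(n-m)\mathcal O_K$, forcing $p\le N+M$. Step 2 is also essentially right, modulo routine care with the primes dividing the discriminant and leading coefficient.

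The genuine gap is the one you flag yourself, and it is not a peripheral refinement but the heart of the lemma. For $d=[K:\mathbb Q]=2$ --- the case of every real quadratic irrational, e.g.\ $\alpha=\sqrt2$ --- your first-moment bound is provably stuck at $M/2$: the large-prime contribution is \emph{exactly} $M\log N\,(1+o(1))$ (your upper bound on the smooth part is matched by the trivial lower bound), and a single $n$ can absorb up to $2\log N$ of it, e.g.\ when $N\bigl((n+\alpha)\mathfrak a\bigr)$ is essentially prime. So no amount of tightening within this framework reaches $51/100$. Moreover, your reduction in Step 1 replaces the property ``has a private prime ideal'' by the strictly stronger property ``has a prime ideal above a rational prime $>N+M$''; a prime ideal above a rational prime $p\le N+M$ is also private whenever it happens to divide $(m+\alpha)\mathfrak a$ for only the one value $m=n$ among $m\le N+M$, and discarding these is plausibly exactly what locks you at $1/2$. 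Cassels's actual argument is tuned to extract the extra margin (indeed, as the paper's Remark notes, it yields $27M/50=0.54\,M$), and this is what forces the specific choices $M=[10^{-6}N]$ and the constant $51/100$. Your sketch of how to close the $d=2$ case (a sieve or second-moment bound on nearly prime values of $G(n)$ in a short interval) is only a direction, not an argument --- note in particular that bounding the number of $n$ with $G(n)$ nearly prime in an interval of length $N/10^6$ is itself a nontrivial sieve problem --- so as written the proof is incomplete precisely in the case the theorem most needs.
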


\smallskip

\begin{rmk}
It is not difficult to see that the proof of Cassels 
yields at least $27M/50$ integers $n$ in $ N< n \le N + M$ 
with the above property. We will make use of this fact
in the proof of \thmref{thm2}.
\end{rmk}

\section{\bf Proofs of \thmref{thm1} and \thmref{thm2}}

\smallskip

In order to prove \thmref{thm1} and \thmref{thm2}, we shall need 
the following propositions.

\begin{prop}\label{prop1}
Let $\alpha>0$ be any transcendental number and $N \ge 1$ be an integer.  
Also, let $g_1, \cdots, g_N$ be a sequence of complex 
numbers having absolute value $1$. Then for any real 
number $T$ and $\epsilon > 0$, there exists a real number $t > T$
such that
\begin{equation*}
|(n+\alpha)^{-it} - g_n| < \epsilon 
\end{equation*}
for all $1 \le n \le N$.
\end{prop}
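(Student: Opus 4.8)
The plan is to deduce Proposition~\ref{prop1} from Kronecker's theorem (Theorem~\ref{thm3}) applied to the real numbers $\log(n+\alpha)$ for $1 \le n \le N$. First I would observe that writing $(n+\alpha)^{-it} = e^{-it\log(n+\alpha)}$ converts the problem into one about simultaneous approximation of the ``phases'' $t\log(n+\alpha)$ modulo $2\pi$: given any target values $g_n = e^{2\pi i\theta_n}$ with $\theta_n \in \R$, it suffices to find $t > T$ with $\| \frac{t}{2\pi}\log(n+\alpha) + \theta_n \| < \delta$ for a suitable $\delta = \delta(\epsilon)$, where $\|\cdot\|$ denotes distance to the nearest integer; the elementary estimate $|e^{2\pi i x} - 1| \le 2\pi \|x\|$ shows that $\delta = \epsilon/(2\pi)$ works, so that $|(n+\alpha)^{-it} - g_n| < \epsilon$ for all $n$.

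The key step is checking the hypothesis of Kronecker's theorem, namely that $\alpha_n := \frac{1}{2\pi}\log(n+\alpha)$, $n = 1, \dots, N$, are linearly independent over $\Z$. Equivalently, I must show that $\log(1+\alpha), \dots, \log(N+\alpha)$ are $\Z$-linearly independent. Suppose $\sum_{n=1}^N c_n \log(n+\alpha) = 0$ with $c_n \in \Z$ not all zero; exponentiating gives $\prod_{n=1}^N (n+\alpha)^{c_n} = 1$, i.e.\ $\prod_{c_n > 0}(n+\alpha)^{c_n} = \prod_{c_n < 0}(n+\alpha)^{-c_n}$, which is a nontrivial polynomial relation with integer (in fact rational, after clearing) coefficients satisfied by $\alpha$. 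Since $\alpha$ is transcendental, this forces the polynomial $\prod_{c_n>0}(x+n)^{c_n} - \prod_{c_n<0}(x+n)^{-c_n}$ to be the zero polynomial, which is impossible because the two products have distinct roots $\{-n : c_n > 0\}$ and $\{-n : c_n < 0\}$ (and at least one factor is nontrivial). Hence the logarithms are linearly independent over $\Z$.

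With independence established, I would apply Theorem~\ref{thm3} with $\alpha_n = \frac{1}{2\pi}\log(n+\alpha)$, $\beta_n = -\theta_n$, the given $T$, and $\delta = \epsilon/(2\pi)$: this produces a real $t > T$ and integers $x_n$ with $|t\alpha_n - \beta_n - x_n| < \delta$, i.e.\ $\|\frac{t}{2\pi}\log(n+\alpha) + \theta_n\| < \epsilon/(2\pi)$ for all $n$. Then
\begin{equation*}
\left| (n+\alpha)^{-it} - g_n \right| = \left| e^{-it\log(n+\alpha)} - e^{2\pi i\theta_n} \right| = \left| e^{-it\log(n+\alpha) - 2\pi i\theta_n} - 1 \right| \le 2\pi \left\| \frac{t}{2\pi}\log(n+\alpha) + \theta_n \right\| < \epsilon,
\end{equation*}
which is the claimed estimate. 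The only real obstacle is the linear independence argument, and that is where transcendence of $\alpha$ is used in an essential way — over $\overline{\Q}$ one could have nontrivial multiplicative relations among the $n+\alpha$, which is precisely why Theorem~\ref{thm2} needs the separate, more delicate treatment via Cassels' lemma rather than Kronecker's theorem. Everything else is routine bookkeeping with the estimate $|e^{i\phi}-1| \le |\phi|$.
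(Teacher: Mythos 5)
Your proof is correct and follows essentially the same route as the paper: write $g_n$ as a unit complex exponential, verify that the numbers $\log(n+\alpha)$ are $\Z$-linearly independent (which the paper merely asserts from the transcendence of $\alpha$, whereas you supply the exponentiation-to-a-polynomial-identity argument), and apply Kronecker's theorem to $\frac{1}{2\pi}\log(n+\alpha)$ with a quantitative $|e^{i\phi}-1|\le|\phi|$ bound in place of the paper's appeal to continuity. No gaps; your write-up is in fact slightly more complete than the paper's.
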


\begin{proof}
Since $\alpha$ is transcendental, the numbers $\log(n+\alpha) $ are linearly 
independent over $\Q$.  Write $g_n = e^{-i\alpha_n}$, where 
$\alpha_n$'s are real numbers. Let  $\delta > 0$ be arbitrary. Then 
by Kronecker's theorem, 
there exists a real number $t > T$ and integers $x_1, \cdots, x_N$ such that
\begin{equation*}
\left|t ~\frac{\log(n+\alpha)}{2\pi}- \frac{\alpha_n}{2\pi} - x_n \right|< 
\frac{\delta}{2\pi}.
\end{equation*}
Multiplying both sides by $2\pi$, we get
\begin{equation*}
|t \log(n+\alpha)- \alpha_n - 2\pi x_n |< \delta.
\end{equation*}
Hence we have
\begin{equation*}
|e^{-it\log(n+\alpha)} - e^{-i\alpha_n}|<\epsilon
\end{equation*}
since $e^{-ix}$ is continuous. This completes the proof.
\end{proof}

\smallskip

Our next proposition shows that with a little 
modification in the properties of $g_n$,
one can get a similar result as above when $\alpha$ is a
positive algebraic number. In this case, consider 
the number field ${\rm K} = \Q(\alpha)$. 
For each prime ideal $\mathfrak{p}$ in the ring 
of integers $\mathcal{O}_{\rm K}$, 
let $\chi(\mathfrak{p})$ be a complex number with 
$|\chi(\mathfrak{p})|=1$. We extend $\chi$ to any element $\gamma$ of the 
number field $\rm K$ by setting 
\begin{equation}\label{one}
\chi(\gamma) = \prod_{\mathfrak{p}} \chi(\mathfrak{p})^{\nu_{\mathfrak{p}}} 
\phantom{mm} \text{ if } \phantom{m} 
\gamma\mathcal{O}_{\rm K} = \prod_{\mathfrak{p}} \mathfrak{p}^{\nu_{\mathfrak{p}}}.
\end{equation}
Here we have the following proposition. 
\smallskip

\begin{prop}\label{prop2}
Let $N \in \N, ~\alpha \in \overline{\Q} \cap \R_+$. Also let  
${\rm K} = \Q(\alpha)$ and $\chi$ be as in \eqref{one}. 
Then for any real number $T$ and $\epsilon > 0$, there 
exists a real number $t > T$ such that
\begin{equation*}
|(n+ \alpha)^{-it} - \chi(n + \alpha)| < \epsilon 
\end{equation*}
for all $0 \leq n \leq N$. 
\end{prop}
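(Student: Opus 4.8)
The plan is to follow the proof of \propref{prop1} as closely as possible. The only difference is that when $\alpha$ is algebraic, the numbers $\log(n+\alpha)$, $0\le n\le N$, need no longer be linearly independent over $\Q$: there may be nontrivial multiplicative relations $\prod_n (n+\alpha)^{c_n}=1$ in ${\rm K}^*$, so Kronecker's theorem cannot be applied to them directly. The device that repairs this is the observation that the extension of $\chi$ prescribed by \eqref{one} is in fact a group homomorphism from ${\rm K}^*$ to the unit circle: from $\nu_{\mathfrak p}(\gamma_1\gamma_2)=\nu_{\mathfrak p}(\gamma_1)+\nu_{\mathfrak p}(\gamma_2)$, where $\nu_{\mathfrak p}(\gamma)$ is the exponent of $\mathfrak p$ in $\gamma\mathcal{O}_{\rm K}$, one gets $\chi(\gamma_1\gamma_2)=\chi(\gamma_1)\chi(\gamma_2)$, while $|\chi(\gamma)|=1$ throughout. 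Thus $\chi$ respects every multiplicative relation among the numbers $n+\alpha$, and this is exactly what permits the passage to a basis.

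Concretely, let $G$ be the subgroup of $({\rm K}^*\cap\R_+,\cdot)$ generated by $\alpha,1+\alpha,\dots,N+\alpha$. It is finitely generated, and torsion free since it embeds in $(\R_+,\cdot)$; hence $G$ is free abelian of some finite rank $d$. If $d=0$ then every $n+\alpha$ equals $1$, forcing $N=0$ and $\alpha=1$, in which case the claim is trivial (both sides equal $1$); so assume $d\ge 1$ and fix a $\Z$-basis $\gamma_1,\dots,\gamma_d$ of $G$. Since $\log$ is injective on $\R_+$ and $G$ is free on $\gamma_1,\dots,\gamma_d$, any $\Q$-linear relation $\sum_j c_j\log\gamma_j=0$ clears denominators to a $\Z$-relation $\prod_j\gamma_j^{Dc_j}=1$ and is therefore trivial; hence $\log\gamma_1,\dots,\log\gamma_d$ are linearly independent over $\Z$. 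For each $0\le n\le N$ write $n+\alpha=\prod_{j=1}^d\gamma_j^{a_{n,j}}$ with $a_{n,j}\in\Z$; applying $\log$ and, separately, the homomorphism $\chi$ gives
\begin{equation*}
\log(n+\alpha)=\sum_{j=1}^d a_{n,j}\log\gamma_j,\qquad
\chi(n+\alpha)=\prod_{j=1}^d \chi(\gamma_j)^{a_{n,j}}.
\end{equation*}

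Now write $\chi(\gamma_j)=e^{-i\beta_j}$ with $\beta_j\in\R$, and let $\eta>0$ be a parameter. Applying \thmref{thm3} to the $\Z$-linearly independent reals $\tfrac{\log\gamma_1}{2\pi},\dots,\tfrac{\log\gamma_d}{2\pi}$ with targets $\tfrac{\beta_1}{2\pi},\dots,\tfrac{\beta_d}{2\pi}$, we obtain $t>T$ and integers $x_j$ with $|t\log\gamma_j-\beta_j-2\pi x_j|<\eta$, whence $|\gamma_j^{-it}-\chi(\gamma_j)|=|e^{-it\log\gamma_j}-e^{-i\beta_j}|\le\eta$ for every $j$. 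Since $\gamma_j^{-it}$ and $\chi(\gamma_j)$ are unimodular, and for unimodular numbers one has $|z^{-1}-w^{-1}|=|z-w|$ and, inductively, $|z_1\cdots z_m-w_1\cdots w_m|\le\sum_\ell|z_\ell-w_\ell|$, expanding $(n+\alpha)^{-it}=\prod_j(\gamma_j^{-it})^{a_{n,j}}$ as a product of $\sum_j|a_{n,j}|$ unimodular factors and comparing term by term with the corresponding expansion of $\chi(n+\alpha)=\prod_j\chi(\gamma_j)^{a_{n,j}}$ yields
\begin{equation*}
|(n+\alpha)^{-it}-\chi(n+\alpha)|\le\Bigl(\sum_{j=1}^d|a_{n,j}|\Bigr)\eta\le C\eta,\qquad
C:=\max_{0\le n\le N}\sum_{j=1}^d|a_{n,j}|,
\end{equation*}
where $C$ is a positive integer (positive because $G\ne\{1\}$) depending only on $\alpha$ and $N$. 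Taking $\eta:=\epsilon/C$ finishes the argument.

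I expect the only genuine obstacle to be the one isolated above: the numbers $\log(n+\alpha)$ may be $\Q$-linearly dependent, so Kronecker's theorem does not apply to them directly. The resolution — replacing them by a $\Z$-basis of the group they generate, which is legitimate precisely because $\chi$, being built from the ideal factorization, descends to a character of that group — is the whole content of the proposition; everything else (freeness of a finitely generated torsion-free abelian group, uniform continuity of $e^{-ix}$, the unimodular product estimate, and boundedness of the finitely many exponents $a_{n,j}$) is routine bookkeeping.
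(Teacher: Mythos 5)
Your proof is correct and follows essentially the same route as the paper: pass to a $\Z$-basis of the multiplicative group generated by the $n+\alpha$, apply Kronecker's theorem to the (now $\Z$-independent) logarithms of the basis elements, and transfer the approximation back via the unimodular product estimate, using that $\chi$ is multiplicative. If anything, you are more careful than the paper, which does not explicitly verify the $\Z$-linear independence of the logarithms of the basis elements or record that $\chi$ respects the multiplicative relations.
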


\begin{proof}
Consider the multiplicative subgroup $\rm A$ of $\R^*$ generated by 
\begin{equation*}
{\rm S}:=  \{ n + \alpha ~|~ 0\leq n \leq N \}.
\end{equation*}
One can choose a $\Z$-basis 
${\rm B}:= \{ s_j \mid 1 \le j \le l \}$ 
of A. Then there exists 
an integer $M > 0$ such that
for any $0 \le n \le N$, we have 
$$
n + \alpha = \prod_{j=1}^l  s_j^{u_j},
$$
where $u_j \in \Z$ with $|u_j| \le M$.
This implies that
\begin{equation}\label{eq1}
\chi(n+\alpha) = \prod_{j=1}^l \chi(s_j)^{u_j}. 
\end{equation}
By Kronecker's theorem, for any $\epsilon > 0$, there 
exists a real number 
$t > T$ such that
\begin{equation}\label{eq2}
|s_j^{-it} - \chi(s_j)| <  \epsilon/ Ml,  
\end{equation}
for all $s_j \in {\rm B}$. Now for any $n + \alpha \in {\rm S}$ 
and $\epsilon > 0$, we have
by \eqref{eq1} and \eqref{eq2} that 
\begin{eqnarray*}
&& |(n + \alpha)^{-it} - \chi(n + \alpha)| \\
&=& \left| \prod_{j=1}^l {s_j}^{-itu_j}
 -  \prod_{j=1}^l \chi(s_j)^{u_j} \right|,  
\text{  where } u_j \in \Z \text{ and } |u_j| \le M \\
& \le & M \sum_{j=1}^l \left| {s_j}^{-it} - \chi(s_j)\right| 
~ < \epsilon.
\end{eqnarray*}
Hence the proposition.
\end{proof}

We now prove the following proposition which will play an important
role in the proof of  \thmref{thm2}.

\begin{prop}\label{prop3}
Let $0 < r_1 \le r_2 \le \cdots \le r_n$ be real numbers. Then the set 
$$
\Delta_n :=  \left\{  c_1 r_1 +  \cdots + c_n r_n ~|~  |c_i| =1, ~c_i \in \C \right\} 
$$
for $n \ge 1$ is a closed annulus with outer 
radius $R_n$ and inner radius
$$
T_n : = 
 \begin{cases} 
r_n  - R_{n-1}  & \text{ if   $R_{n-1} \le  r_n$,}\\
 0  & \text{ otherwise. }
 \end{cases}
$$
Here $R_0 : = 0$ and $R_i : = r_1 + \cdots + r_i$ for $1 \le i \le n$.
\end{prop}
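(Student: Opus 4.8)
The plan is to argue by induction on $n$, using the geometric fact that the set $\{c r : |c| = 1\}$ for a fixed $r > 0$ is the circle of radius $r$, and that $\Delta_n = \Delta_{n-1} + \{c_n r_n : |c_n| = 1\}$ is the Minkowski sum of the previous annulus with a circle of radius $r_n$. So the real content is: \emph{the Minkowski sum of a closed annulus $\{z : T \le |z| \le R\}$ with a circle of radius $r > 0$ is again a closed annulus, with outer radius $R + r$ and inner radius $\max(0, \, |T - r|, \ldots)$} — more precisely one must check the inner radius comes out to $r - R$ when $R \le r$ and $0$ otherwise, which matches $T_n$ once we observe $R_{n-1} \ge T_{n-1}$ so the binding constraint on the inner radius is always $r_n$ versus $R_{n-1}$, never $r_n$ versus $T_{n-1}$.

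For the base case $n = 1$, $\Delta_1 = \{c_1 r_1 : |c_1| = 1\}$ is the circle $|z| = r_1$, i.e.\ the degenerate annulus with $R_1 = r_1 = T_1$ (and indeed $R_0 = 0 \le r_1$ gives $T_1 = r_1 - 0 = r_1$). For the inductive step, assume $\Delta_{n-1}$ is the closed annulus $\{z : T_{n-1} \le |z| \le R_{n-1}\}$. First I would show $\Delta_n \subseteq \{z : T_n \le |z| \le R_n\}$: for $w \in \Delta_{n-1}$ and $|c_n| = 1$, the triangle inequality gives $|w + c_n r_n| \le R_{n-1} + r_n = R_n$, and $|w + c_n r_n| \ge \big| |w| - r_n \big| \ge T_n$ — here one checks that if $R_{n-1} \le r_n$ then $|w| \le R_{n-1} \le r_n$ forces $\big||w| - r_n\big| \ge r_n - R_{n-1} = T_n$, while if $R_{n-1} > r_n$ the quantity $\big||w| - r_n\big|$ can be as small as $0$ (taking $|w| = r_n$, which is achievable since $T_{n-1} \le r_{n-1} \le r_n \le R_{n-1}$), consistent with $T_n = 0$. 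For the reverse inclusion, given any target $z$ with $T_n \le |z| \le R_n$, I would produce $w \in \Delta_{n-1}$ and a unit $c_n$ with $w + c_n r_n = z$: choose the modulus $\rho := |w| \in [T_{n-1}, R_{n-1}]$ so that the circle of radius $\rho$ about... actually more cleanly, fix $\rho \in [T_{n-1}, R_{n-1}]$ with $|\,\rho - r_n\,| \le |z| \le \rho + r_n$ (such $\rho$ exists precisely because $|z|$ lies in $[T_n, R_n]$ — this is the one inequality-juggling step), then the circle $|w| = \rho$ and the circle $|z - w| = r_n$ intersect by the triangle inequality, giving the desired decomposition; finally rotate to place $z$ correctly.

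The main obstacle I anticipate is the bookkeeping in the reverse inclusion: showing that for every $z$ in the candidate annulus there genuinely exists an admissible intermediate modulus $\rho \in [T_{n-1}, R_{n-1}]$, which requires the chain of inequalities $T_{n-1} \le r_n + r_{n-1} \cdots$ — in fact it reduces to checking $T_{n-1} \le R_{n-1}$ (true by induction, vacuously when $\Delta_{n-1}$ is a disk or point) and that the intervals $[\,\rho - r_n, \rho + r_n\,]$ for $\rho$ ranging over $[T_{n-1}, R_{n-1}]$ sweep out exactly $[T_n, R_n]$. This is elementary but needs the ordering hypothesis $r_1 \le \cdots \le r_n$ in an essential way (to guarantee $r_n \ge r_{n-1} \ge T_{n-1}$, hence that $\rho = r_n$ is a legal choice when $R_{n-1} \le r_n$), so I would be careful to invoke it explicitly at that point.
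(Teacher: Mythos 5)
Your argument is correct, and it fills in by hand what the paper dispatches in one sentence. The paper's proof simply observes that $\Delta_n$ is compact, connected (as a continuous image of the torus $(S^1)^n$) and invariant under rotation about the origin; any such subset of $\C$ is automatically a closed annulus, since its image under $z \mapsto |z|$ is a compact interval $[a,b]$ and rotation invariance then forces $\Delta_n = \{z : a \le |z| \le b\}$. That topological observation does all the work of your reverse inclusion for free, leaving only the computation of the extreme moduli $a = T_n$, $b = R_n$, which the paper relegates to ``induction on $n$.'' Your route instead realizes $\Delta_n$ as the Minkowski sum $\Delta_{n-1} + \{c r_n : |c|=1\}$ and proves both inclusions explicitly, the reverse one via the two-circles-intersection criterion $|\rho - r_n| \le |z| \le \rho + r_n$ and a sweep of the intermediate modulus $\rho$ over $[T_{n-1}, R_{n-1}]$. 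I checked the delicate points: the degenerate case $z=0$ (which needs $r_n \in [T_{n-1}, R_{n-1}]$, guaranteed by $T_{n-1} \le r_{n-1} \le r_n \le R_{n-1}$ exactly as you note), and the fact that the intervals $[\,|\rho - r_n|, \rho + r_n\,]$ sweep out precisely $[T_n, R_n]$; both go through, and this is where the ordering hypothesis enters, as you correctly flag. The trade-off: the paper's argument is slicker and avoids the inequality bookkeeping entirely, while yours is more elementary, fully explicit about which coefficients $c_i$ realize a given point, and arguably more convincing since the paper's ``follows by induction'' still silently requires verifying the formula for the inner radius.
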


\begin{proof}
Note that the set $\Delta_n$ is compact, connected and invariant under rotation
around the origin and hence the result follows by induction on $n$.


\end{proof}

Before we state our next proposition, we shall  formulate a
hypothesis which is integral to our proofs. 
Let $f$ be a periodic arithmetic function with period $q \ge 1$.

{\it Hypothesis: 
For any  $\delta > 0$, there exists a function
 $F(s)$ analytic in  the region $\Re(s) > 1$ satisfying the 
following properties:
\begin{enumerate}
\item 
There exists a $\sigma_0$ with $F(\sigma_0) =0$ and 
$1 < \sigma_0 < 1+ \delta$. 
\item 
For any real number $T$ and  real numbers $\epsilon, \theta >0$, 
there exists a real number $t > T$ such that
$$
|L(s +it , f, \alpha) - F(s)| < \epsilon \phantom{mm}
{\rm for~~ all} \phantom{m}  \sigma > 1+ \theta.
$$
\end{enumerate}
}

\smallskip

In this context,  we have the following proposition.

\begin{prop}\label{prop4}
Let $\alpha \in \R_{+}$ and $f$ be as above.
Assume the previous hypothesis. Then $L(s,f,\alpha)$ has infinitely 
many zeros for $\sigma > 1$. 
\end{prop}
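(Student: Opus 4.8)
The plan is to deduce the existence of a zero from the hypothesis by a routine application of Rouch\'e's theorem, and then to generate infinitely many zeros by exploiting the freedom in the choice of the vertical shift $t$.

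First I would record that $F$ is not identically zero. Comparing $L(\sigma + iu, f, \alpha)$ with the first non-vanishing term of its Dirichlet series shows that $|L(\sigma + iu, f, \alpha)|$ exceeds a positive constant independent of $u$ once $\sigma$ is large enough; hence property (2) of the hypothesis forces $F \not\equiv 0$. Since $F$ is analytic on $\Re(s) > 1$, its zero at $\sigma_0 \in (1, 1+\delta)$ is isolated, so I can fix $\rho > 0$ so small that the closed disc $\overline{D} := \{\, s : |s - \sigma_0| \le \rho \,\}$ lies in $\{\Re(s) > 1\}$ (possible because $\sigma_0 > 1$) and contains no zero of $F$ other than $\sigma_0$. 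Set $m := \min_{|s - \sigma_0| = \rho} |F(s)|$, which is then strictly positive.

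Next, choose $\theta > 0$ with $1 + \theta < \sigma_0 - \rho$, so that $\overline{D} \subset \{\sigma > 1+\theta\}$, and choose $\epsilon$ with $0 < \epsilon < m$. Given any real $T$, property (2) of the hypothesis yields a real $t > T$ with
$$
|L(s + it, f, \alpha) - F(s)| < \epsilon \le m \le |F(s)| \qquad \text{on } |s - \sigma_0| = \rho .
$$
By Rouch\'e's theorem, $s \mapsto L(s + it, f, \alpha)$ and $F$ have the same number of zeros, counted with multiplicity, inside $D$; as $F$ has at least one such zero, so does $L(\cdot + it, f, \alpha)$. Translating back, $L(s, f, \alpha)$ has a zero $s$ with $|s - (\sigma_0 + it)| < \rho$, so in particular $\Re(s) > \sigma_0 - \rho > 1$ and $|\Im(s) - t| < \rho$.

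Finally, to obtain infinitely many zeros I would iterate this construction: applying it successively with $T = 1, 2, 3, \dots$ (or, after each zero is produced, with $T$ larger than that zero's imaginary part) yields zeros of $L(s, f, \alpha)$ in $\{\sigma > 1\}$ whose imaginary parts are unbounded, hence pairwise distinct. The only point needing a little care is the geometric bookkeeping in the third paragraph, namely keeping $\overline{D}$ simultaneously inside the region $\sigma > 1 + \theta$ where the approximation in (2) is valid and strictly to the right of the line $\sigma = 1$; since $\sigma_0$ is a fixed point with $\sigma_0 > 1$ this is achieved simply by taking $\rho$ and $\theta$ small enough, and no genuine obstacle arises — the hypothesis has been framed precisely so that the Rouch\'e argument goes through.
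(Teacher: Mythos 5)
Your proof is correct and follows essentially the same route as the paper's: apply Rouch\'e's theorem on a small circle about $\sigma_0$ kept inside the region $\sigma > 1+\theta$, use property (2) to force $|L(s+it,f,\alpha)-F(s)|<|F(s)|$ on that circle for some $t>T$, and let $T\to\infty$ to produce infinitely many distinct zeros. Your preliminary observation that $F\not\equiv 0$ (so the zero at $\sigma_0$ is isolated and the minimum of $|F|$ on the circle is positive) is a detail the paper leaves implicit but is a worthwhile addition.
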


\begin{proof}
Let $T, \delta > 0$ be real numbers. We will show that  there exists a zero
$s_1$ of $L(s,f,\alpha)$ with $1< \Re(s_1) < 1+ \delta$ and $\Im(s_1) > T$. 

Let $F(s)$ be a function corresponding to $\delta$ in the hypothesis.
By property $(1)$ of $F(s)$, it has a zero $\sigma_0$ with
$1 < \sigma_0 < 1 + \delta$.  Since $F(s)$ is an analytic function,
one can choose $\delta_1 > 0$ such that $1+ \delta_1 < 1+ \delta$ and 
$1 + \delta_1 < \sigma_0$ with $F(s) \ne 0$  for $|s - \sigma_0| = \delta_1$.  Set 
\begin{equation*}
\epsilon : =  \underset{ ~|s - \sigma_0| = \delta_1}{\text{min }}|F(s)| 
\phantom{m}  \text{  and } \phantom{m}   \theta < \sigma_0 - \delta_1 - 1. 
\end{equation*}
Then $ \sigma_0 - \delta_1 > 1+ \theta$ and hence by  property $(2)$
of $F(s)$, there exists a real number $t > T$ such that
$$
|L(s+ it, f, \alpha) - F(s)| < |F(s)|
$$ 
on $|s - \sigma_0| = \delta_1$. Thus by Rouch\'e's theorem, the function 
$L(s+it, f, \alpha)$ has a zero $s_1$  which gives a zero  $s_1 + it $
of $L(s,f,\alpha)$.
\end{proof}

\smallskip

In view of the above proposition, our task is to construct
functions $F(s)$ as described in the hypothesis.

\subsection{\bf Proof of \thmref{thm1}}

Let $\alpha$ be a positive transcendental number. 
In this case, replacing $f$ by $-f$ if needed,
we can assume that the residue 
$\frac{1}{q} \sum_{b=1}^{q} f(b)$ of $L(s,f,\alpha)$ 
is a positive real number. 

Since $L(s,f,\alpha)$ converges absolutely for $\sigma> 1$, for
any $\delta>0$, one can choose an integer $m$ such that
\begin{equation}\label{trans}
\sum_{n=0}^m \frac{f(n)}{(n+\alpha)^{1+\delta}} >  \sum_{n=m+1}^{\infty} 
\frac{f(n)}{(n+\alpha)^{1+\delta}}
\end{equation}
Define 
\begin{eqnarray*}
F(s) := \sum_{n=0}^{\infty} \frac{f(n)\beta(n)}{(n+\alpha)^s}  
\phantom{mm} \text{ for } \Re(s) > 1,
\end{eqnarray*}
where $\beta$ is the arithmetic function defined by
$$
\beta(n) := 
 \begin{cases} 
- 1 & \text{ if  $n > m$,}\\
 1 & \text{ otherwise. }
 \end{cases}
$$
By \eqref{trans}, it is clear that $F(1+ \delta) > 0$. 
On the other hand, since $L(s,f,\alpha)$ has a pole at 
$s=1$,  $F(s) \to - \infty$ as $s \to 1+$. 
Since $F(s)$ is a real valued continuous function
when $s$ is real and $s >1$,
it follows that $F(s)$ has a zero in
the interval  $(1, 1+ \delta)$. 
Thus $F(s)$ is an analytic function satisfying (1).
It follows from \propref{prop1} that given any real numbers 
$T, \epsilon> 0$, there exists a real number $t> T$
such that
$$
| L(s + it,f,\alpha) - F(s)| < \epsilon  \text{  for } \Re(s)> 1. 
$$
Since $F(s)$ is a function satisfying properties (1) and (2)
in the hypothesis, 
\thmref{thm1} follows from \propref{prop4}.

\subsection{\bf Proof of \thmref{thm2}}

Let $\alpha$ be a positive algebraic irrational number
and $f$ be a positive real valued periodic function satisfying 
the conditions of \thmref{thm2}. 
Let  $\delta > 0$ be fixed.
We  shall define for $\Re(s) > 1$, a function 
$$
F(s) = \sum_{n=0}^{\infty}\frac{f(n) \chi(n+\alpha)}{(n + \alpha)^s},
$$
where $\chi$ is a suitably chosen character on the group
of fractional ideals of \linebreak $K = \Q(\alpha)$. Here
$\chi(n+\alpha)$ is the value of this character on
the principal ideal $(n+\alpha)\mathcal{O}_K$.

Clearly, such a function is holomorphic in $\Re(s) >1$.
Furthermore, it follows from \propref{prop2}
that $F(s)$ satisfies property (2). 

We shall  show that it is possible to define $\chi$ suitably
to ensure the existence of a $\sigma$ with 
$1 < \sigma < \text{ min }(1 + \delta, 2)$ satisfying
\begin{equation}\label{algebraic}
F(\sigma)= \sum_{n=0}^{\infty}\frac{f(n) \chi(n+\alpha)}{(n + \alpha)^{\sigma}} = 0.
\end{equation}
Then this function will also satisfy property (1) and thereby
establish the theorem. The idea of constructing $\chi$ with the 
above properties  is similar to that of Cassels \cite{JC}, however
our proof admits the function $f$ which did not appear
in Cassels's paper.

We first begin by setting  $N_1 = [\text{max }(N_0,  10^7, 10^7\alpha)]$,
where $[x]$ is the integral part of a real number $x$.
Then since $L(s, f, \alpha)$ has a pole at $s =1$, there exists
a $\sigma$ such that 
\begin{equation}\label{case}
\sum_{n=0}^{N_1} \frac{f(n)}{(n + \alpha)^{\sigma}} 
< 10^{-2}  \sum_{n=N_1 + 1}^{\infty}  \frac{f(n)}{(n + \alpha)^{\sigma}}
\end{equation}
and $1< \sigma < \text{ min }(1 + \delta, 2)$. 

We now define an infinite sequence of integer 
pairs $N_j, M_j$ for $j\ge 1$ by 
$$
M_j:= \left[\frac{N_j}{10^6}\right] 
\text{     and     } 
N_{j+1}: = N_j  + M_j.
$$
To prove \eqref{algebraic}, it is sufficient to show that we can construct
a character $\chi$ such that
\begin{equation}\label{eqalg}
\left| \sum_{n=0}^{N_j} \frac{f(n) \chi(n+\alpha)}{(n + \alpha)^{\sigma}} \right|
~<~ 10^{-2}  \sum_{n=N_j + 1}^{\infty}  \frac{f(n)}{(n + \alpha)^{\sigma}}
\end{equation}
for all $j$. 

Let $\mathfrak{a}$ be the ideal denominator of $\alpha$ 
so that $\mathfrak{a} (n + \alpha)$ is an integral ideal
for every integer $n$. If $\mathfrak{p} | \mathfrak{a}$ or $ \mathfrak{p} | 
(n + \alpha)\mathfrak{a}$ for $n \le N_1$,
we choose $\chi(\mathfrak{p}):= 1$. 
Then by \eqref{case}, we see that \eqref{eqalg} is true for $j=1$. 

Suppose that \eqref{eqalg} is true for all integers $\le j$. Define two subsets 
of integers as follows:
$$
\mathfrak{A}: = \left\{ n ~|~  N_j < n \le N_{j+1},~  \exists ~\mathfrak{p}_n | (n + \alpha) \mathfrak{a}
\text{ but }  \mathfrak{p}_n \nmid \prod_{m \le N_{j+1} 
\atop m \ne n} (m + \alpha) \mathfrak{a} \right\}  
$$
and
$$
\mathfrak{B}:= \left\{ n ~|~ N_j < n \le N_{j+1}, ~  n \not\in \mathfrak{A} \right\}. 
$$
By Cassels's lemma and the choice of $N_j$, we have $|\mathfrak{A}| \ge 27 M_j/ 50$. 

Note that if $\mathfrak{p} | \prod_{m \le N_{j+1}} (m + \alpha) \mathfrak{a}$,
then there are three possibilities, namely  $\mathfrak{p} |  \prod_{m \le N_j} (m + \alpha) \mathfrak{a}$
or $\mathfrak{p} = \mathfrak{p}_n$ for some $n \in \mathfrak{A}$
or $\mathfrak{p}$ is different from both of these types.
By induction hypothesis, we already know the values of $\chi(\mathfrak{p})$ 
when $\mathfrak{p}$  is of the first type and we define $\chi(\mathfrak{p}):= 1$ 
when $\mathfrak{p}$ is of the third type. Now we will define  
$\chi(\mathfrak{p})$ when $\mathfrak{p}$ is of the second type in such a way
that \eqref{eqalg} is true for $j+1$.

By the hypothesis of the theorem, we have
$$
\frac{f(n)}{f(m)} \le 1.15 \phantom{m} \text{ for all } n,m \in \N,
$$
and hence
$$
\frac{f(n)}{(n+\alpha)^{\sigma}} <  \frac{3 f(m)}{(m+\alpha)^{\sigma}}
$$
for any $n,m \in \mathfrak{A}$ as we have
\begin{equation*}
{(n+a)^{\sigma}\over (m+a)^{\sigma}} < 2 
\end{equation*}
for any $n,m \in \mathfrak{A}$. 
Since $|\mathfrak{A}| > 5$, it follows from \propref{prop3} 
that 
$$
\sum_{n \in \mathfrak{A}}\frac{f(n) \chi(n+\alpha)}{(n+\alpha)^{\sigma}} 
$$
can take any value $z$ with 
$$
|z| \le \sum_{n \in \mathfrak{A}}\frac{f(n)}{(n+\alpha)^{\sigma}}
=S_3, \text{ say}. 
$$
Write
\begin{equation*}
\Lambda := \sum_{n \le N_j} \frac{f(n) \chi(n+\alpha)}{(n+\alpha)^{\sigma}}  
              ~+~ \sum_{n \in \mathfrak{B}} \frac{f(n) \chi(n+\alpha)}{(n+\alpha)^{\sigma}} 
\end{equation*}
and set
\begin{eqnarray*}
S_1 &:=& \left| \sum_{n \le N_j} \frac{f(n) \chi(n+\alpha)}{(n+\alpha)^{\sigma}}\right| \\
S_2 &:=& \sum_{n \in \mathfrak{B}} \frac{f(n)}{(n+\alpha)^{\sigma}}. 
\end{eqnarray*}
Also set
\begin{equation*}
z := 
\begin{cases} 
 - \Lambda & \text{ if  $0 < |\Lambda| \le S_3$},\\
 -  S_3 \Lambda / |\Lambda| & \text{ if $|\Lambda| > S_3$}, \\
  0   & \text{ if  $\Lambda = 0$.}
 \end{cases} 
\end{equation*}
Then by appropriate choice of $\chi(n+\alpha)$ for $n \in \mathfrak{A}$,
we have
\begin{eqnarray*}
\left| \sum_{n \le N_{j+1}} \frac{f(n) \chi(n+\alpha)}{(n+\alpha)^{\sigma}}   \right| 
\le \text{ max} \left\{ 0, ~S_1 + S_2 - S_3 \right\}.
\end{eqnarray*}
Since $|\mathfrak{B} | \le 23 M_j/50$ and $|\mathfrak{A}| \ge 27M_j /50$, we have
\begin{eqnarray*}
\frac{S_3}{S_2} &\ge&  \frac{27}{23c}
\frac{(N_j + \alpha)^{\sigma}}{(N_{j+1} + \alpha)^{\sigma}}\\
& > & \frac{27 \times 10^7 \times 10^7}{23(10^7 +11)^2\times 1.15} ~>~ \frac{101}{99}.
\end{eqnarray*}
This implies that
\begin{equation}\label{new}
100 (S_3 - S_2) > S_3 + S_2.
\end{equation}
Set 
$$
S_4:= \sum_{n > N_{j+1}} \frac{f(n)}{(n + \alpha)^{\sigma}}.
$$
By induction,
$$
S_1 ~<~ 10^{-2} (S_3 + S_2 + S_4).
$$
Thus by \eqref{new}, we have
$$
S_1 + S_2 - S_3 < 10^{-2} S_4.
$$
This proves \eqref{eqalg} by induction
and hence we have \eqref{algebraic}. 
 This proves the theorem.

\bigskip
\noindent
{\bf Acknowledgments.} The authors thank R. Balasubramanian,  M. Ram Murty, J. Oesterl\'e, P. Rath,   
B. Saha and E. Saha for helpful discussions.  
They would also like to thank the referee for valuable suggestions.

\smallskip

\end{document}